\newtheorem{theorem}{Theorem}[section]
\theoremstyle{definition}
\newtheorem{definition}[theorem]{Definition}
\theoremstyle{remark}
\newcounter{smalllist}
\DeclareMathOperator*{\sgn}{sgn}
\numberwithin{equation}{section}
\newcommand{\lb}{\label}
\newcommand{\beq}{\begin{equation}}
\newcommand{\eeq}{\end{equation}}
\newcommand{\bal}{\begin{align}}
\newcommand{\eal}{\end{align}}
\newcommand{\bals}{\begin{align*}}
\newcommand{\eals}{\end{align*}}
\newcommand{\bbR}{{\mathbb{R}}}
\newcommand{\bbZ}{{\mathbb{Z}}}
\newcommand{\bbS}{{\mathbb{S}}}
\newcommand{\calL}{{\mathcal L}}
\newcommand{\calK}{{\mathcal K}}
\begin{document}
\title[Virtual Linearity for KPP Reactions]
{Virtual Linearity for \\ KPP Reaction-Diffusion Equations}

\author{Andrej Zlato\v s}
\address{\noindent Department of Mathematics \\ UC San Diego \\ La Jolla, CA 92093, USA \newline Email:
zlatos@ucsd.edu}


\begin{abstract}
We show that long time solution dynamic for general reaction-advection-diffusion equations with KPP reactions is virtually linear in the following sense.  Its leading order depends on the non-linear reaction only through its linearization at $u=0$, and it can also be recovered for general initial data by  instead solving the PDE for restrictions of the initial condition to unit cubes on $\bbR^d$ (the latter means that non-linear interaction of these restricted solutions has only lower order effects on the overall solution dynamic).  The result holds under a uniform bound on the advection coefficient, which we show to be sharp.  
We also extend it to models with non-local diffusion and KPP reactions. 
\end{abstract}

\maketitle

\section{Introduction and Main Results} \lb{S1}

Many processes in nature are modeled by the reaction-diffusion equation
\beq\lb{1.1}
u_t = \calL u + f(t,x,u).
\eeq
The unknown function $u$ represents concentration of a substance or density of a species, which is subject to diffusion as well as some possibly space-time dependent reactive process (which may be a combination of birth and death processes), modeled by the two terms on the right-hand side of \eqref{1.1}.  
The basic case is $\calL=\Delta$, but 
when diffusion may be inhomogeneous, non-isotropic, and time-dependent, and an underlying advective motion may also be present (such as for processes occurring in fluid media), one instead considers the more general case
\beq\lb{1.2}
\calL u(t,x):= \sum_{i,j=1}^d A_{ij}(t,x) u_{x_ix_j}(t,x)  + \sum_{i=1}^d b_i(t,x) u_{x_i} (t,x).
\eeq
We will study here this model and its non-local version
\beq\lb{1.2a}
\calL u(t,x) := {\rm p.v.} \int_{\bbR^d} K (t,x,\nu) \left[ u(t,x+\nu)-u(t,x) \right] d\nu,
\eeq
with KPP (a.k.a.~Fisher-KPP)  reactions $f$.  Named after Kolmogorov, Petrovskii, and Piskunov \cite{KPP} and Fisher \cite{Fisher}, who first studied them in 1937, these reactions are defined as follows.

\begin{definition} \lb{D.1.0}
A Lipschitz function $f:\bbR^+\times\bbR^d\times [0,1]\to\bbR$ is a {\it KPP reaction} if $f(\cdot,\cdot,0)\equiv 0\equiv f(\cdot,\cdot,1)$ and $f(t,x,u)\le f_u(t,x,0)u $ for all $(t,x,u)\in\bbR^+\times \bbR^d\times [0,1]$ (with $f_u(\cdot,\cdot,0)$ existing pointwise), plus the following uniform hypotheses are satisfied. 
We have $\inf_{(t,x)\in\bbR^+\times\bbR^d} f(t,x,u)>0$ for each $u\in(0,1)$, as well as $\inf_{(t,x)\in\bbR^+\times\bbR^d} f_u(t,x,0)>0$ and   
\[
\lim_{u\to 0} \sup_{(t,x)\in\bbR^+\times\bbR^d} \left( f_u(t,x,0)-\frac{f(t,x,u)}u \right)=0.
\]
\end{definition}

Hence we will have $(t,x)\in \bbR^+\times\bbR^d$, and we define $f(t,x,\cdot)$ only on $[0,1]$ because we will consider here solutions $0\le u\le 1$ (i.e., with normalized concentration or density $u$).  While both $u\equiv 0$ and $u\equiv 1$ are stationary solutions for \eqref{1.1}, the former is unstable while the latter is asymptotically stable because $f>0$ on $\bbR^+\times\bbR^d\times(0,1)$, and one is interested in the transition of solutions from values near 0 to those near 1, which models invasion of the low concentration region $u\approx 0$ by the modeled substance or population.

\subsection{Virtual Linearity in the Classical Diffusion Case}

There is a vast literature on \eqref{1.1} with KPP reactions, and it would be futile to try to include here a list of even the most relevant works.  The reader can consult the reviews \cite{Berrev, Xinrev} and references therein, although many more important papers appeared since their publication.  When reviewing any of them, one notices that essentially all the results concerning asymptotic speeds of propagation of solutions (as opposed to, e.g., those relating to precise locations of the transition regions between values $u\approx 0$ and $u\approx 1$, such as in \cite{Bra, Lau, HNRR}) have one thing in common:  {\it they depend on $f$ only through $f_u(\cdot,\cdot,0)$.}  This is because the main KPP hypothesis $f(t,x,u)\le f_u(t,x,0)u $ means that the reaction strength $\frac {f(t,x,u)}u$   (i.e., the zeroth order coefficient when $u=u(t,x)$ is any solution to \eqref{1.1} and the PDE is viewed as a linear PDE for it) is greatest at values $u\approx 0$, so the leading order of the solution dynamic is determined at these values (and therefore spatially  at the leading edge of the invading population).  This is also sometimes referred to as {\it pulled} dynamic, as opposed ot the {\it pushed} dynamic for some other types of reactions, such as ignition or bistable, whose reaction strength is largest at intermediate values of $u$ (and therefore the solution dynamic is also primarily determined at these values \cite{ZlaInhomog, ZlaBist}).

Since $f(t,x,u)\approx f_u(t,x,0)u$ at $u\approx 0$, one might then think that the leading order of the solution dynamic for \eqref{1.1} with a KPP reaction is the same as for the linear PDE with $f_u(t,x,0)u$ in place of $f(t,x,u)$ (at least when considering the minimum of a solution of the latter and 1).  This is  not true in general because even if a spatial region has  large $f_u(\cdot,\cdot,0)$ for all time (such regions can drive the linear dynamic in all space-time), its effect on the non-linear dynamic  vanes once the solution reaches values away from 0 on that region.  Nevertheless,  dependence of the leading order of the solution dynamic on $f_u(\cdot,\cdot,0)$ only (rather than on all of $f$) is a version of linearity, and the author is not aware of any prior result that formally establishes this phenomenon for general KPP dynamics.   

Our first main result, Theorem \ref{T.1.1} below, therefore appears to be the first such result.  Moreover, it also demonstrates that \eqref{1.1} with KPP reactions shares another property with linear equations.  Namely, that the leading order of the solution dynamic for a general initial condition $u(0,\cdot)$ can be recovered from solving the PDE with initial conditions that are obtained by restricting $u(0,\cdot)$ to members of a partition of $\bbR^d$ into compact sets (we use below the unit cubes $C_n:= (n_1,n_1+1)\times\dots\times(n_d,n_d+1)$ with $n\in \bbZ^d$, but our proofs can be easily adapted to other choices).  That is, nonlinear interaction between the resulting initially compactly supported solutions does not affect the leading order of the solution dynamic, so in this sense the solution operator for \eqref{1.1} is also close to being linear.

This {\it virtual linearity} of \eqref{1.1} means that to investigate the leading order of the solution dynamic, it suffices to replace a general KPP reaction by some ``template'' reaction sharing the same $f_u(\cdot,\cdot,0)$ (e.g., $f'$ in Theorem \ref{T.1.1}), as well as to only consider solutions with initial data supported inside small compact sets.  We demonstrate it for general KPP reactions $f$ and uniformly elliptic diffusion matrices $A$, together with advection vectors $b$ that are uniformly bounded above by twice the square root of the product of the ellipticity constant of $A$ and $\inf f_u(\cdot,\cdot,0)$.  Moreover, this bound on $b$  is in fact sharp (see below).  

\begin{theorem} \lb{T.1.1}
Let  $f$ be a KPP reaction 
and $\calL$ be from \eqref{1.2}, where 
 $A=(A_{ij})$ is a bounded symmetric matrix with $A\ge \lambda I$ for some $\lambda>0$, and the vector $b=(b_1,\dots,b_d)$ satisfies $\|b\|_{L^\infty}^2< 4\lambda \inf_{(t,x)\in\bbR^+\times\bbR^d} f_u(t,x,0)$.
Let $g(u):=\min\{u,1-u\}$ and
 \beq \lb{2.30}
 f'(t,x,u):=f_u(t,x,0)g(u)
 \eeq
  for $(t,x,u)\in \bbR^+\times\bbR^d\times[0,1]$.
Then there is $\phi :\bbR^+\to\bbR^+$ with $\lim_{s\to \infty} \phi (s)=0$, 
and for each  $\delta\in(0,\frac 12]$ there is $\tau_\delta\ge 1$, 
such that  the following holds (see also Remark 3 below for the dependence of $\phi,\tau_\delta$ on $A,b,f$).  

If  $u:\bbR^+\times \bbR^d\to [0,1]$  solves \eqref{1.1}, and for each $n\in\bbZ^{d}$ we let $u_n' :\bbR^+\times \bbR^d\to[0,1]$ solve \eqref{1.1} with $f'$ in place of $f$ and with $u_n' (0,\cdot):=  u(0,\cdot) \chi_{C_n}$, 
then for each $(t,x)\in[\tau_\delta,\infty)\times \bbR^d$,
\beq  \lb{2.11}
\sup_{n\in\bbZ^d} u_n' \left( t-\delta t, x \right) -\phi(\delta t) \le u(t,x)  \le  \sup_{n\in\bbZ^d}  u_n'  \left(t+t^\delta,x \right)  +\phi(t^\delta).
\eeq
Moreover, if there is $\gamma>0$ such that for each $(t,x)\in \bbR^+\times \bbR^d$ the function $u\mapsto \frac {f(t,x,u)}u$ is non-increasing on $(0,\gamma ]$ and  $\sup_{u\in[\gamma ,\infty)} \frac {f(t,x,u)}u = \frac {f(t,x,\gamma )} {\gamma }$, then \eqref{2.11} also holds with $u_n'$ instead solving \eqref{1.1} with $f$, and with the first inequality being just $\sup_{n\in\bbZ^d} u_n' (t,x)\le u(t,x)$.
\end{theorem}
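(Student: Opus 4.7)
The plan is to prove the two inequalities in \eqref{2.11} by separate comparison arguments, both built around the linearized equation $v_t = \calL v + f_u(t,x,0)v$. The hypothesis $\|b\|_{L^\infty}^2 < 4\laa \inf f_u(\cdot,\cdot,0)$ is sharp because it exactly guarantees that this linear equation admits uniform exponential growth (with rate $\inf f_u(\cdot,\cdot,0) - \|b\|_{L^\infty}^2/(4\laa) > 0$) on all of $\bbR^d$, via a plane-wave ansatz that absorbs the drift. This is what will allow polynomial-in-$t$ losses to be swallowed into the small time shifts $\delta t$ and $t^\delta$.

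For the right-hand inequality in \eqref{2.11}, I first compare $u$ with the linear solution $v$ of $v_t = \calL v + f_u(t,x,0)v$ having the same initial data. The KPP bound $f \le f_u(\cdot,\cdot,0) u$ gives $u \le v$ by the maximum principle. By linearity, $v = \sum_{n\in\bbZ^d} v_n$ with $v_n(0,\cdot) = u(0,\cdot)\chi_{C_n}$. Each $u_n'$ agrees with $v_n$ while the latter stays below $1/2$, because $f'(t,x,w) = f_u(t,x,0)w$ on $[0,1/2]$; once $v_n$ would exceed $1/2$, the template reaction $f'$ drives $u_n'$ to within $\phi(\tau)$ of $1$ in uniformly bounded additional time $\tau$, thanks to the uniform positivity $\inf f'(\cdot,\cdot,w) > 0$ on compact subsets of $(0,1)$. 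This gives a uniform bound $\min\{v_n(t,x), 1\} \le u_n'(t+\tau,x) + \phi(\tau)$. Finally, standard Gaussian-type heat-kernel estimates for $\calL$ (available from ellipticity and boundedness of $A,b$) yield $v(t,x) \le C(1+t)^{d/2}\sup_n v_n(t,x)$, and absorbing the polynomial factor into a time shift of order $\log t \ll t^\delta$ (using the exponential growth rate from above) gives the right inequality of \eqref{2.11}.

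For the left-hand inequality, monotone comparison for \eqref{1.1} with reaction $f$ gives $u(t,x) \ge u_n^f(t,x)$, where $u_n^f$ solves \eqref{1.1} with the same $f$ and initial datum $u(0,\cdot)\chi_{C_n}$. It therefore suffices to show $u_n^f(t,x) \ge u_n'(t - \delta t, x) - \phi(\delta t)$ uniformly in $n$. For any $\eps > 0$, the uniform KPP limit hypothesis provides $\rho(\eps) > 0$ such that $f(t,x,w) \ge (1-\eps)f_u(t,x,0)w$ on $w \in [0, \rho(\eps)]$, while on $w \in [\rho(\eps), 1]$ the uniform positivity of $f$ permits the analogous inequality $f(t,x,w) \ge (1-\eps)f_u(t,x,0)g(w)$ after possibly shrinking $\rho(\eps)$. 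Thus $u_n^f$ is a super-solution of $w_t = \calL w + (1-\eps) f_u(t,x,0) g(w)$; a time rescaling by the factor $1-\eps$ converts this into the $f'$-equation, yielding $u_n^f(t,x) \ge u_n'((1-\eps)t, x) - \phi_\eps(t)$ with $\phi_\eps(t) \to 0$ as $t \to \infty$. Setting $\eps := \delta$ produces the stated shift $\delta t$.

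Under the moreover hypothesis, $f(t,x,\cdot)$ is subadditive on $[0,1]$ (concavity at $0$ combined with the saturation condition at $\gamma$), so $\sum_n u_n'$ is a super-solution of \eqref{1.1} with reaction $f$ and initial data $u(0,\cdot)$, giving $u \le \sum_n u_n'$; the sum-to-sup conversion and the shift $t \mapsto t + t^\delta$ then proceed as before, now avoiding the detour through $f'$. The first inequality $\sup_n u_n'(t,x) \le u(t,x)$ is immediate by monotone comparison, since $u_n'(0,\cdot) \le u(0,\cdot)$. The principal obstacle I foresee is in the left inequality of the general case: turning the loss of an $\eps$ factor in the growth rate into a clean time shift of exactly $\eps t$ must be done uniformly across all initial data $u(0,\cdot)\chi_{C_n}$ and all base times, and it is precisely here that the sharp constraint on $\|b\|_{L^\infty}$ is needed to keep the implied constants uniform in $n$ and $t$.
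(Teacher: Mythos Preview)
Your proposal has a genuine gap in the left-hand inequality, and a subtler but equally serious one in the right-hand inequality.

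\textbf{Left inequality.} The claim that $f(t,x,w)\ge (1-\eps)f_u(t,x,0)g(w)$ for $w\in[\rho(\eps),1]$ is false. The KPP hypotheses relate $f(t,x,w)$ to $f_u(t,x,0)$ only as $w\to 0$; for $w$ bounded away from $0$ there is no such relationship. For instance, $f(u)=u(1-u)^M$ with $M$ large has $f_u(0)=1$ but $f(\tfrac12)=2^{-M-1}\ll (1-\eps)g(\tfrac12)$. So $u_n^f$ is \emph{not} a supersolution of $w_t=\calL w+(1-\eps)f'(t,x,w)$, and your comparison collapses. (The time-rescaling step is also broken: rescaling $t$ multiplies $\calL$ by $1-\eps$ as well as the reaction.) The paper bypasses this by a different device: for each small $s>0$ the function $s\,e^{-\psi(s)t}u_n'(t,x)$ is a subsolution of the $f$-equation, because its values never exceed $s$ and on $[0,s]$ the reaction $f$ is within $\psi(s)$ of linear. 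This costs a factor $s\,e^{-\psi(s)t}$, which is then recovered by a uniform exponential-growth estimate $u(t,x)\ge\min\{e^{\kappa\delta t}u(t-\delta t,x),\,1-\phi(\delta t)\}$ applied to $u$ itself over the window $[t-\delta t,t]$.

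\textbf{Right inequality and where the drift bound enters.} You invoke ``$f'$ drives $u_n'$ to within $\phi(\tau)$ of $1$ in uniform time $\tau$'' as if this followed from $\inf f'(\cdot,\cdot,w)>0$. It does not: with a strong drift $b$, mass can be swept away so that a solution starting from $\tfrac12\chi_{B_1(x)}$ never approaches $1$ at the fixed point $x$. Establishing this uniform hair-trigger/exponential-growth property is the technical heart of the paper, and it is precisely here---not in ``keeping constants uniform'' for the left inequality---that the sharp hypothesis $\|b\|_{L^\infty}^2<4\lambda\inf f_u(\cdot,\cdot,0)$ enters. The paper constructs explicit ballistically-spreading radial subsolutions from a profile $\zeta$ tied to the ODE $\lambda z^2+(2\sqrt{\beta\lambda}-\tfrac{B}{3})z+(\beta-C)=0$, whose discriminant is negative exactly under this hypothesis; iterating these subsolutions pushes any nonzero solution from an arbitrarily small level up to $1-\phi(s)$ in time $s$. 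With this growth estimate in hand, both inequalities in \eqref{2.11} follow from the elementary comparisons $s\,e^{-\psi(s)t}\sup_n u_n'\le u\le\gamma^{-1}\sum_n u_n'$ together with traveling exponential supersolutions $e^{at-(x-x_n)\cdot e_n}$ (rather than heat-kernel bounds, which are not available for non-divergence $\calL$ with merely bounded coefficients) to truncate the sum to $O(t^d)$ terms.
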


{\it Remarks.} 
1. So up to $o(t)$ time shifts (or even $o(t^{0+})$ time shifts in the second claim) and $o(1)$ errors, we have  $u\approx \sup_{n\in\bbZ^d} u_n' $. This of course also means that if $v$ is a solution to \eqref{1.1} with $f$ replaced by another KPP reaction that has the same linearization at $u=0$, then $v\approx u$ in the same sense provided $v$ has the same initial datum (and perturbations of the latter can also be handled easily, via the maximum principle).
\smallskip

2.  The proof can be easily adjusted so that both $t^\delta$ in \eqref{2.11} are replaced by $\delta t$.
\smallskip

3.  The proof shows that if  some non-decreasing function $\psi$ with   $\lim_{u\to 0}\psi(u)=0$ satisfies
\[
\psi(u)\ge  \sup_{(t,x)\in\bbR^+\times\bbR^d} \left(f_u(t,x,0)-\frac{f(t,x,u)}u \right),
 \]
some Lipschitz $f_0$ with $f_0'(0)> \frac 1{4\lambda}\|b\|_{L^\infty}^2$
satisifes $\inf_{(t,x)\in\bbR^+\times\bbR^d} f(t,x,u)\ge f_0(u)>0$  for each $u\in(0,1)$  (such $f_0$ exists because $\psi$ does), and we let  $0<B\le 2\sqrt{f_0'(0)\lambda} - \|b\|_{L^\infty} $ and 
\[
0<\alpha \le  
\min\left\{  \min_{i,j}\|A_{i,j}\|_{L^\infty}^{-1}, 
\, \|f_u(\cdot,\cdot,0)\|_{L^\infty}^{-1}
\right\},
\]
then in the first claim of the theorem, $\phi$ and $\tau_\delta$ depend only on $\alpha,\lambda,\psi,f_0,B,d$ 
(and $\tau_\delta$ also on $\delta$), while in the second claim they also depend on $\gamma$.\smallskip

4.  One could in fact replace $\sup_{n\in\bbZ^d} u_n'$ by  $\min \left\{ \sum_{n\in\bbZ^d} u_n',1 \right\}$ in \eqref{2.11}, which has a more ``linear'' feel but is also less convenient to use --- including in  applications of Theorem~\ref{T.1.1} to homogenization for KPP reaction-diffusion dynamics in random environments, which we provide in the companion papers \cite{ZhaZla5, ZlaKPPhomog}.
\smallskip

5.  Here $g$ could be any other $(t,x)$-independent KPP reaction with $g(u)\equiv u$ on $[0,\frac 12]$.  However,  we cannot use the linear dynamics, with $g(u)$ replaced by $u$ for all $u\ge 0$.  Indeed, consider for instance $\calL:=\Delta$ and $f_u(t,x,0)=1+C \chi_{B_1(0)}(x)$.  Then  the asymptotic speed of propagation of solutions to \eqref{1.1} with both $f$ and $f'$ is well known to be $2$, but this speed increases when $f$ is replaced by $f_u(t,x,0)u$ and $C\ge 0$ is large enough.  It is in fact 2 when $\lambda\le 2$ and  $\frac \lambda {\sqrt{\lambda-1}}$ otherwise, where $\lambda$ ($\to\infty$ as $C\to\infty$) is the principal eigenvalue of  $\Delta+1+C \chi_{B_1(0)}$ on $\bbR^d$ (with $\sqrt{\lambda-1}$ being the asymptotic exponential decay rate of the corresponding radial eigenfunction).
\smallskip

6. One  cannot hope for this result to extend to general non-KPP reactions.  This is the case even if we let $f':=f$; in this case $\sup_{n\in\bbZ^d} u_n' \le u$ is obvious but the second inequality in \eqref{2.11} need not hold even with $t^\delta$ replaced by $\delta t$ (see the remark after \cite[Theorem 1.4]{ZlaKPPhomog}).
\smallskip

The bound on $\|b\|_{L^\infty}$ in Theorem \ref{T.1.1} is sharp.  Indeed, consider $u_t=u_{xx}+\bar bu_x+g(u)$ with $\bar b>0$ a constant and $u(0,\cdot):=\chi_{(0,1)}$, in which case $2\sqrt{f_0'(0)\lambda}=2$ (taking $f_0:= g$).  It is well known (see, e.g., \cite{Lau}) that if $w$ solves this Cauchy problem without the first-order term, there is a strictly decreasing continuous function $q:(0,1)\to\bbR$ such that if we denote by  $x_\theta(t)>0$ the unique point with $w(t, x_\theta(t))=\theta=w(t, 1-x_\theta(t))$, then  for each $\theta_0\in(0,1)$ we have
\[
\lim_{t\to\infty} \sup_{\theta\in[\theta_0,1-\theta_0]} \left| x_\theta(t)- \left(2t-\frac 32\ln t +q(\theta) \right) \right|=0.
\]
Of course, the original PDE is solved by $u(t,x):=w(t,x+\bar bt)$; this also equals $u_0'$ in the theorem, while all other $u_n'$ are zero. So for $y_t:=(2-b)t-\frac 32\ln t +q(\frac 12)$ we have $\lim_{t\to\infty} u(t,y_t)=\frac 12$, while for each $\delta\in(0,1)$ we have 
\[
y_t=y_{t-\delta t} - (\bar b-2)\delta t - \frac 32 |\ln(1-\delta)| = y_{t+t^\delta} + (\bar b-2)t^\delta +\frac 32 \ln(1+t^{\delta-1}).
\]
Hence whenever $\bar b\in(2,2+\frac 4\delta)$, we obtain
\[
\lim_{t\to\infty} \sup_{n\in\bbZ} u_n'(t-\delta t,y_t) = 1  \qquad\text{and}\qquad 
\lim_{t\to\infty} \sup_{n\in\bbZ} u_n'(t+t^\delta,y_t)= 0,
\]
which contradicts both inequalities in \eqref{2.11} (and for any other KPP reaction $f$ with $f_u(0)=1$ we get the same result because the above asymptotics still hold, albeit with a different $q$).  Even for $\bar b=2$ we find that
\[
\lim_{t\to\infty} \sup_{n\in\bbZ} u_n'(t-\delta t,y_t)= q^{-1} \left( q \left( \frac 12 \right) - \frac 32 |\ln(1-\delta)| \right) > \frac 12,
\]
which still contradicts the first inequality (and if one wants the last claim in the theorem to technically not apply, it suffices to change the reaction on a short time interval $[0,t_0]$ and at all large $x$ so that the hypothesis of that claim is not satisfied but the change does not affect any point $(t,x,u(t,x))$; then the particular solution $u$ considered here is also unchanged).

Nevertheless, it is possible that a version of Theorem \ref{T.1.1} does hold with a larger uniform upper bound on $b$, provided $u_n'(t-\delta t,\cdot)$ and $u_n'(t+t^\delta,\cdot)$ are evaluated at some $(t,x,\delta)$-dependent points instead of at $x$.  Theorem \ref{T.2.1} below with $U\equiv 1\equiv U'$ would yield such a result if its hypotheses \eqref{2.3} and \eqref{2.3a} can be verified in some relevant setting.  A trivial example of this is the setting of Theorem \ref{T.1.1} with a large constant vector $\bar b$ added to $b$, when we clearly obtain \eqref{2.11} with $u_n'(t-\delta t,x+\bar b \delta t)$ and $u_n'(t+t^\delta,x-\bar bt^\delta)$ on the left and right.

\subsection{Extension to Non-local Diffusions}

Theorem \ref{T.1.1} extends to \eqref{1.1} with non-local diffusion operators from \eqref{1.2a} under suitable hypotheses.  Firstly, it is crucial that solutions do not propagate faster than ballistically, which requires the diffusion kernels $K$ to decay exponentially as $\nu\to\infty$.  Secondly, even when $K$ is close in some sense to being even in $\nu$ (here we will only assume it to be even)  one can only allow $O(|\nu|^{-d-2+\alpha})$ growth as $\nu\to 0$ (with some $\alpha>0$) if $\calL$ is to be well-defined.  This is the same growth as for $\calL=-(-\Delta)^{\alpha/2}$, for which well-posedness, comparison principle, and the parabolic Harnack inequality are known to hold \cite{BSV}.  The methods used to establish these should equally apply to various  kernels with $O(|\nu|^{-d-2+\alpha})$ asymptotics as $\nu\to 0$ that decay exponentially as $\nu\to\infty$.  However, instead of trying to prove them in any level of generality, we will state these properties as hypotheses  so that our result applies whenever these can be established.
We note that the Harnack inequality referred to here is the forward one (unlike for $\calL$ from \eqref{1.2}, equations with non-local diffusions may also satisfy backward-in-time Harnack inequalities, such as in \cite{BSV}).  Our main result in this setting is now the following analog of Theorem \ref{T.1.1}.


\begin{theorem} \lb{T.1.1a}
 Let $f$ be a KPP reaction and let $f'$ be from \eqref{2.30}.  Assume that $\calL$ is from \eqref{1.2a}, with $ K $  from some family $F$ of even-in-$\nu$ kernels such that for some $\alpha\in(0,1]$ and any $ K \in F$,  there is $\calK:(0,\infty)\to[0,\infty)$ with $\chi_{(0,\alpha]}(r)\le  \calK(r)\le \chi_{(0,\alpha]}(r)r^{-d-2+\alpha}$ on $(0,\infty)$ and 
\beq\lb{2.31}
\alpha  \calK(|\nu|) \le   K (t,x,\nu) \le \alpha^{-1} \max\left\{ \calK(|\nu|),  e^{-\alpha|\nu|} \right\} 
\eeq
for  each $(t,x,\nu)\in\bbR^+\times\bbR^{2d}$.  Assume that \eqref{1.1} with any such $ K $, any KPP reaction $f$, and locally BV initial data $0\le u(0,\cdot)\le 1$ is well-posed in some subspace $\mathcal A\subseteq L^{1}_{\rm loc}(\bbR^+\times \bbR^d)$, where the comparison principle for sub- and supersolutions to \eqref{1.1} as well as the parabolic (forward) Harnack inequality also hold (the latter with  uniform constants for all $K\in F$ and all $f$ with the same  Lipschitz constant), and the solutions for $u(0,\cdot)\equiv 0,1$ are $u\equiv 0,1$, respectively.   Then the claims in Theorem \ref{T.1.1}  hold for such $\calL$.
%
\end{theorem}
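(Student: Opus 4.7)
The plan is to carry through the proofs of Theorem \ref{T.1.1} and Theorem \ref{T.1.2} essentially verbatim for parts (i) and (ii), replacing only those ingredients that exploit the local form of $\calL$. The local proof of Theorem \ref{T.1.1} rests on four structural ingredients: linearity of the underlying parabolic operator (so that $\sum_{n\in\bbZ^d}v_n$ dominates the solution of the linearized equation), the reaction-level KPP inequality $f(t,x,u)\le f_u(t,x,0)u$, the comparison principle, and quantitative regularity sufficient to trade a bound on $v_n(t,\cdot)$ for one on $u_n'(t+t^\delta,\cdot)$ up to a $\phi(t^\delta)$ error. All four are available under the hypotheses of Theorem \ref{T.1.1a}(i): \eqref{1.2a} is linear, the KPP inequality is independent of $\calL$, comparison is hypothesized, and the required regularity is furnished by the assumed forward parabolic Harnack inequality, with constants uniform over the family $F$.

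The upper bound in \eqref{2.11} would then follow by decomposing $u(0,\cdot)=\sum_{n}u(0,\cdot)\chi_{C_n}$, dominating $u$ pointwise by $\sum_n v_n$ via linearity and comparison, and using Harnack to interpolate between $v_n$ and $u_n'$: the two coincide while $u_n'\ll 1$, and differ by only an $O(1)$ reactive defect once $u_n'$ is bounded away from $0$, a defect that Harnack converts into the small time shift $t^\delta$. The lower bound, and similarly the second claim of Theorem \ref{T.1.1} in which $f'=f$, would be obtained symmetrically from $\lim_{u\to 0}\sup(f_u-f/u)=0$ (respectively from the monotonicity hypothesis on $f(t,x,u)/u$) combined with the same Harnack-to-time-shift conversion. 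The spectral bound $\|b\|_{L^\infty}^2<4\lambda\inf f_u$ of Theorem \ref{T.1.1} has no direct analog in the non-local setting; its role of bounding ballistic propagation of the linearized solution is instead played by the exponential tail of $K$ in \eqref{2.31}, while coercivity in the reaction term is automatic from $\inf f_u(\cdot,\cdot,0)>0$ in Definition \ref{D.1.0}. Remark 3 transfers with $(\alpha,\lambda,B)$ replaced by the single kernel parameter $\alpha$ of \eqref{2.31}, the remaining dependencies being unchanged.

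For part (ii), once (i) is in hand the argument of Theorem \ref{T.1.2} proceeds with only bookkeeping changes. Virtual linearity reduces the homogenization problem for $f$ to the one for the template reaction $f'$, which is piecewise linear in $u$; and for $f'$ the existence of a deterministic convex bounded Wulff shape $\calS\ni 0$ follows from a subadditive ergodic theorem applied to ballistic hitting times for the associated level sets, using $x$-stationarity and $t$-periodicity of $(K,f_u(\cdot,\cdot,0,\cdot))$, with convexity coming from the near-linearity of $f'$ at $u=0$ and boundedness from the exponential tail in \eqref{2.31}. Formula \eqref{1.9} then reads off $c^*(e)$ from $\calS$, and \eqref{1.8} extends from compactly supported initial data to general $G$ via \eqref{1.7} exactly as in the local case, by constructing sub- and supersolution envelopes from translates of the Wulff-shape solution.

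The main obstacle is quantitative uniformity: converting the hypothesized forward Harnack inequality into the $\phi(\delta t)$ and $\phi(t^\delta)$ errors in \eqref{2.11} with rate $\phi$ and threshold $\tau_\delta$ depending only on the parameters listed in the revised Remark 3 and uniformly over $K\in F$. Under \eqref{2.31} the expected degeneration rate is the one obtained for stable-like operators in \cite{BSV}, but extracting family-uniform constants rather than $K$-by-$K$ control is the delicate step; once this is settled, every remaining estimate reduces to an argument already carried out in the local proofs of Theorems \ref{T.1.1} and \ref{T.1.2}.
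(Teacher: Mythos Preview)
Your proposal misidentifies the central mechanism of the proof and leaves a genuine gap. You describe the Harnack inequality as the engine that ``converts'' the $O(1)$ reactive defect between the linear $v_n$ and the nonlinear $u_n'$ into the time shifts $\delta t$ and $t^\delta$ in \eqref{2.11}. That is not how the argument works, either in the local case or here. In the paper's scheme (Theorem~\ref{T.2.1} and its non-local analog Theorem~\ref{T.2.2}), the time shifts come from a \emph{quantitative exponential growth hypothesis} \eqref{2.3}--\eqref{2.3a}: any solution with $u(t_0,x)$ small must satisfy $u(t_0+s,x)\ge e^{\kappa s}u(t_0,x)$ until it reaches $1-\phi(s)$. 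The Harnack inequality is used only once, at the very end, to say that $u(t_0,\cdot)$ does not oscillate too much near a point; it plays no role in producing the time shift.

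The real work in proving part~(i), which your proposal omits entirely, is \emph{verifying} \eqref{2.3} for the non-local operator. This is done by an explicit construction of a sequence of radially symmetric, ballistically spreading subsolutions $u_{v,0},u_{v,1},\dots$ whose plateau levels climb to $1$. For this one needs to estimate $\calL$ applied to functions of the form $\zeta(\eta|x|-p-Ct)$, which requires Taylor-expanding $u(x+\nu)+u(x-\nu)-2u(x)$ to third order, using evenness of $K$ in $\nu$ to kill the first-order term, and balancing the two regimes of \eqref{2.31}: the singular part $\calK(|\nu|)$ near $\nu=0$ (whose \emph{lower} bound is what drives the spreading, via an estimate like \eqref{4.4}) against the exponential tail at infinity (which controls the error). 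None of this is ``bookkeeping''; it is the non-local replacement for the ODE analysis behind \eqref{2.13}--\eqref{2.15} in the local proof, and without it there is no $\kappa$ and no $\phi$. Your remark that ``coercivity in the reaction term is automatic from $\inf f_u(\cdot,\cdot,0)>0$'' is true but insufficient: one must show that this reactive gain beats the diffusive loss from $\calL$ on the tail of the subsolution profile, and that is exactly what the explicit construction establishes. Once \eqref{2.3} is in hand, your outline for part~(ii) is broadly correct.
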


{\it Remark.}
One could also extend this result to mixed diffusion operators, with $\calL$ being the sum of the right-hand sides of \eqref{1.2} and  \eqref{1.2a}, but we will not do so here.
\smallskip


\subsection{Acknowledgements}
The author thanks Hongjie Dong, Jessica Lin, Jean-Michel Roquejoffre, and Luis Silvestre for useful discussions and pointers to literature.  He also acknowledges partial support by  NSF grant DMS-1900943 and by a Simons Fellowship.

\section{Classical Diffusion Case} \lb{S2}

The key to Theorem \ref{T.1.1} will be the following result, which provides a version of \eqref{2.11} on general $D\subseteq\bbR^d$ and without quantitative restrictions on $b$.  The price to pay is that the spatial arguments in the three expressions in \eqref{2.11} may be different, and one also needs to guarantee certain exponential growth of solutions from small initial data (which we later show to hold under the hypotheses of Theorem \ref{T.1.1}).  

Consider \eqref{1.1} on some open  $D\subseteq\bbR^d$, with $\calL$ from \eqref{1.2}.  
In the following theorem, all solutions are strong (from $W^{(1,2),d+1}_{\rm loc}(\bbR^+\times D)\cap C(\overline{\bbR^+\times D})$) and are assumed to 
satisfy homogeneous Dirichlet boundary conditions on $\bbR^+\times \partial D$.
We will call such solutions {\it SD solutions}. 
We note that the relevant well-posedness theory as well as comparison principle for sub- and supersolutions follow from, e.g., the corresponding linear theory in \cite[Chapter~7]{Lie} (specifically, Theorems 7.1 and 7.32).

\begin{theorem} \lb{T.2.1}
Let $\calL$ be given by \eqref{1.2}, with  $A=(A_{ij})$  a bounded symmetric
matrix
with $A\ge \lambda I$ for some $\lambda>0$, 
and  $b=(b_1,\dots,b_d)$  a bounded vector.
Let $f,f':\bbR^+\times D\times[0,\infty)\to\bbR$ be Lipschitz  with  $f(\cdot,\cdot,0)\equiv 0\equiv f'(\cdot,\cdot,0)$, and let $U,U':\bbR^+\times D\to[0,1]$ be some functions.
Assume that there are $\gamma \in(0,\frac 12]$
 and non-decreasing $\psi :(0,1)\to[0,\infty)$ with $\lim_{u\to 0} \psi (u)=0$
such that 
$0\le \frac{f'(\cdot,\cdot,u)}u - \frac{f(\cdot,\cdot,u)}u\le \psi (u)$ for each $u\in(0,\gamma ]$, 
for each $(t,x)\in \bbR^+\times D$ the function $u\mapsto \frac {f'(t,x,u)}u$ is non-increasing  on $(0,\gamma ]$ and  $\sup_{u\in[\gamma ,\infty)} \frac {\max\{f(t,x,u),f'(t,x,u)\}}u \le \frac {f'(t,x,\gamma )} {\gamma }$, and
\beq \lb{2.2}
\max\left\{  \max_{i,j}\|A_{i,j}\|_{L^\infty}, \,  \max_i\|b_i\|_{L^\infty}, \, \|f_u(\cdot,\cdot,0)\|_{L^\infty}
\right\} \le\gamma ^{-1}.
\eeq
Also assume that there are $\kappa>0$ and  $\phi :\bbR^+\to\bbR^+$ with $\lim_{s\to \infty} \phi (s)=0$,
and for each $t_0\ge 1$ and  $(s,x)\in\bbR^+\times D$ there are  $y_{t_0,x}^{s},y_{t_0+s,x}^{-s}\in D$, such that $y_{t_0+s,y_{t_0,x}^s}^{-s}=x$  and for any SD solutions $u,u':(t_0-1,\infty)\times D\to [0,1]$ to \eqref{1.1} and to \eqref{1.1} with $f'$ in place of $f$, respectively, we have
%
\begin{align}
u(t_0+s,x)\ge \min \left\{  e^{\kappa  s} u(t_0,y_{t_0+s,x}^{-s}), \, U(t_0+s,x)-\phi (s) \right\}, \lb{2.3}
\\ u'(t_0+s,x)\ge \min \left\{  e^{\kappa  s} u'(t_0,y_{t_0+s,x}^{-s}), \, U'(t_0+s,x)-\phi (s) \right\}. \lb{2.3a}
\end{align}
Let  $u:\bbR^+\times D\to [0,1]$ be an SD solution to \eqref{1.1},
and for each $n\in\bbZ^{d}$ let $u_n' :\bbR^+\times D\to[0,1]$ be the SD solution to \eqref{1.1} with $f'$ in place of $f$ and with $u_n' (0,\cdot):=  u(0,\cdot) \chi_{cC_n\cap D}$ for some $c>0$.  
Then for each  $\delta\in(0,\frac 12]$ 
there is $\tau_\delta\ge 1$ (depending also on $\gamma , \kappa,\psi ,c,d$) such that
\begin{align}
u(t,x) & \ge \min \left\{ \sup_{n\in\bbZ^d} u_n' \left(t-\delta t, y_{t,x}^{-\delta t} \right),\, U(t,x)  - \phi (\delta t) \right\},    \lb{2.4}
\\  \sup_{n\in\bbZ^d}  u_n'  \left(t+t^\delta,y_{t,x}^{t^\delta}\right)  & \ge \min \left\{ u(t,x) - t^{-1/\delta},\, U' \left(t+t^\delta,y_{t,x}^{t^\delta} \right) - \phi (t^\delta) \right\}   \lb{2.5}
\end{align}
for each $(t,x)\in[\tau_\delta,\infty)\times D$.
If $f'=f$, then clearly also 
\beq \lb{2.6}
u(t,x)\ge \sup_{n\in\bbZ^d}  u_n' (t,x).
\eeq
\end{theorem}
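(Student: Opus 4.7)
The centerpiece is a subsolution lemma in the small-amplitude regime. For any $\eta \in (0, \gamma]$, I would show that $w := e^{-\psi(\eta) s}\min\{u_n', \eta\}$ is a subsolution to \eqref{1.1} with $f$ on all of $\bbR^+ \times D$. On $\{u_n' < \eta\}$ this follows from the identity $w_s - \calL w = w[f'(s,\cdot,u_n')/u_n' - \psi(\eta)]$ combined with the non-increasing monotonicity of $f'/u$ on $(0,\gamma]$ (giving $f'(s,\cdot,u_n')/u_n' \le f'(s,\cdot,w)/w$ since $w \le u_n'$) and the hypothesis $f'(s,\cdot,w)/w - f(s,\cdot,w)/w \le \psi(w) \le \psi(\eta)$; on $\{u_n' \ge \eta\}$, where $w \equiv e^{-\psi(\eta) s}\eta$ is spatially constant, $w_s - \calL w = -\psi(\eta) w \le 0 \le f(s,\cdot,w)$ by non-negativity of KPP reactions on $[0,1]$. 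Letting $u_n$ denote the solution to \eqref{1.1} for $f$ with initial data $u(0,\cdot)\chi_{cC_n \cap D}$, the initial datum of $w$ matches that of $u_n$, so comparison yields $u_n \ge e^{-\psi(\eta) s}\min\{u_n', \eta\}$; and $u \ge u_n$ by the standard comparison principle.

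For \eqref{2.4}, I would fix $\delta$, choose $\eta$ so that $\psi(\eta)(1-\delta) \le \kappa\delta$, set $t_0 := t - \delta t$ and $z := y^{-\delta t}_{t,x}$, and invoke \eqref{2.3}: $u(t,x) \ge \min\{e^{\kappa\delta t} u(t_0, z),\, U(t,x) - \phi(\delta t)\}$. Substituting $u(t_0, z) \ge e^{-\psi(\eta)t_0}\min\{u_n'(t_0, z), \eta\}$, the combined exponent becomes $[\kappa\delta - \psi(\eta)(1-\delta)]t \ge 0$. In the sub-case $u_n'(t_0, z) \le \eta$ this gives $u(t, x) \ge \min\{u_n'(t_0, z), U(t,x) - \phi(\delta t)\}$; in the sub-case $u_n'(t_0, z) > \eta$, choosing $\tau_\delta$ large enough that $e^{[\kappa\delta - \psi(\eta)(1-\delta)]\tau_\delta}\eta \ge 1$ forces the right side to be at least $U(t,x) - \phi(\delta t)$, which still majorizes $\min\{u_n'(t_0, z), U(t,x) - \phi(\delta t)\}$. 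Taking the supremum over $n$ yields \eqref{2.4}.

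For \eqref{2.5}, I would work with the linearization $v_t = \calL v + f_u(t,x,0) v$ with $v(0,\cdot) = u(0,\cdot)$; by KPP $u \le v$, and by linearity $v = \sum_n v_n$ with $v_n$ starting from $u(0,\cdot)\chi_{cC_n \cap D}$. Because $g(u) = u$ on $(0, 1/2]$, we have $f'(s,x,u)/u \equiv f_u(s,x,0)$ there, so $u_n' = v_n$ up to the first time $\sigma_n$ at which $\max_x u_n'(s,x) = 1/2$. Applying \eqref{2.3a} with $t_0 = t$ and $s = t^\delta$ yields $u_n'(t + t^\delta, y^{t^\delta}_{t,x}) \ge \min\{e^{\kappa t^\delta} u_n'(t, x),\, U'(t+t^\delta, y^{t^\delta}_{t,x}) - \phi(t^\delta)\}$, so it suffices to produce some $n$ with $u_n'(t, x)$ bounded below polynomially in $t^{-1}$ whenever $u(t, x) \ge t^{-1/\delta}$. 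Aronson-type two-sided Gaussian heat-kernel bounds for the linearization (available from uniform ellipticity $A \ge \lambda I$ and the coefficient bounds \eqref{2.2}) give $\sum_n v_n(t, x) \le C(1+t)^k \sup_n v_n(t, x)$ for some $k = k(d)$. When $t \le \sigma_n$ for the optimizing $n$, this combined with $u_n'(t,x) = v_n(t,x)$ is enough. When $t > \sigma_n$ for some $n$, that cube has already ``fired'' with $u_n' = 1/2$ reached at some earlier $(\sigma_n, z_n)$, and applying \eqref{2.3a} from $(\sigma_n, z_n)$ for time $t + t^\delta - \sigma_n$ (using the composition consistency of $\{y^s_{t,\cdot}\}$ to land at $y^{t^\delta}_{t,x}$) forces $u_n'(t+t^\delta, y^{t^\delta}_{t,x}) \ge U'(t+t^\delta, y^{t^\delta}_{t,x}) - \phi(t^\delta)$.

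The main obstacle is the polynomial-in-$t$ control $\sum_n v_n \le C(1+t)^k \sup_n v_n$ for the linearization, for which I would appeal to standard Aronson Gaussian bounds — the quantitative coefficient bounds \eqref{2.2} and uniform ellipticity $A \ge \lambda I$ are exactly what make them applicable. A secondary technical point is the trajectory bookkeeping in the ``fired-cube'' case above, where the endpoint of the propagation \eqref{2.3a} from $(\sigma_n, z_n)$ must coincide with $y^{t^\delta}_{t, x}$; this relies on the consistency $y^{-s}_{t_0+s, y^s_{t_0, x}} = x$ of the family $\{y^s_{t,\cdot}\}$ together with the freedom the hypothesis allows in choosing trajectories compatibly. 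The $t^{-1/\delta}$ error in \eqref{2.5} must then be tracked through the comparison between $u_n$ and $u_n'$ and through the polynomial prefactor absorbed by $e^{\kappa t^\delta}$.
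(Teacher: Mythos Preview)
Your argument for \eqref{2.4} is essentially the paper's, though the paper uses the cleaner subsolution $s\, e^{-\psi(s)t}\, u_n'$ (with $s \in (0,\gamma]$) instead of $e^{-\psi(\eta)t}\min\{u_n',\eta\}$.  The paper's choice stays globally below $\gamma$, so the monotonicity of $f'/u$ on $(0,\gamma]$ together with the hypothesis $\sup_{u\ge\gamma}\max\{f,f'\}/u\le f'(\cdot,\cdot,\gamma)/\gamma$ handles all values of $u_n'$ at once; your version needs $f\ge 0$ on the set $\{u_n'\ge\eta\}$ (to get $-\psi(\eta)w\le f(\cdot,\cdot,w)$), which is not assumed in Theorem~\ref{T.2.1}.

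Your argument for \eqref{2.5} has a genuine gap.  You identify $u_n'$ with the linear solution $v_n$ below level $\tfrac12$, which requires $f'(s,x,u)/u \equiv f_u(s,x,0)$ for small $u$.  That is the specific choice $f' = f_u(\cdot,\cdot,0)\,g$ of Theorem~\ref{T.1.1}, but Theorem~\ref{T.2.1} only assumes $f'/u$ is non-increasing on $(0,\gamma]$; in general you only get $u_n' \le v_n$, and a lower bound on $\sup_n v_n$ says nothing about $\sup_n u_n'$.  Your ``fired-cube'' repair does not go through either: you need the backward trajectory from $(t+t^\delta,\, y^{t^\delta}_{t,x})$ over time $t+t^\delta-\sigma_n$ to land precisely at the point $z_n$ where $u_n'$ first reached $\tfrac12$, and the single consistency relation $y^{-s}_{t_0+s, y^s_{t_0,x}} = x$ gives no such control.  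The paper's route avoids linearization altogether: it shows directly that $\gamma^{-1}\sum_n u_n'$ is a supersolution to \eqref{1.1} on the region where it is $\le 1$ (using that each $u_n'\le\gamma$ there and the concavity-type hypotheses on $f'$), yielding $u\le \gamma^{-1}\sum_n u_n'$.  Then, in place of Aronson bounds, the elementary exponential supersolutions $e^{at-(x-x_n)\cdot e_n}$ with $a=\gamma^{-1}(1+d+d^2)$ (built from \eqref{2.2} alone) bound the tail $\sum_{|x'-x_n|\ge(a+1)t'} u_n'(t',x')$ by $\tfrac12(t')^{-1/\delta}$; pigeonholing among the remaining $O((t')^d)$ cubes produces one $n$ with $u_n'(t',x')$ polynomially bounded below, and a single application of \eqref{2.3a} with $s=(t')^\delta$ finishes.
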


{\it Remarks.} 
1.  Note that the hypotheses on $f,f'$ guarantee existence of $f_u(t,x,0)=f_u'(t,x,0)$ as well as $\max\{f(t,x,u),f'(t,x,u)\}\le f_u(t,x,0)u$ for all $(t,x,u)\in\bbR^+\times D\times \bbR^+$. 
\smallskip

2.  A natural choice of $U,U'$ are some SD solutions to \eqref{1.1} and to \eqref{1.1} with $f'$ in place of $f$, respectively.  For instance,  when $D=\bbR^d$ and $f(\cdot,\cdot,1)\equiv 0 \equiv f(\cdot,\cdot,1)$, one might consider $U\equiv 1 \equiv U'$.
\smallskip

3. The restriction to $t_0\ge 1$ is needed so that parabolic Harnack inequality guarantees that $u(t_0,\cdot)$ does not vary too much near $y_{t_0+s,x}^{-s}$.  Otherwise hypothesis \eqref{2.3} could not hold for all $u$.\smallskip

4. The point $y_{t,x}^{-s}$ is such that the values of $u$ near $(y_{t,x}^{-s},t-s)$ provide a good lower bound for the value at $(t,x)$, and $y_{t,x}^{s}$ is such that  the values of $u$ near $(t,x)$ provide a good lower bound for the value at $(y_{t,x}^{s},t+s)$.  Of course, if $x\mapsto y_{t,x}^{-s}$ is a bijection on $D$ for some $t,s$, then $z\mapsto y_{t-s,z}^{s}$ must be its inverse.  One natural example of this is $y_{t,x}^s=x$, another is given by the ODE $\frac d{ds} y_{t,x}^s = b(t+s,y_{t,x}^s)$ with $y_{t,x}^0=x$.
\smallskip

5.  One might also consider other boundary conditions, provided the construction of exponentially-decaying ballistically-moving supersolutions from the proof below can be adjusted to that setting.

\begin{proof}
Comparison principle gives for each $s\in(0,\gamma ]$ and all $(t,x)\in\bbR^+\times D$ that
\beq\lb{2.7}
s e^{-\psi (s) t}\sup_{n\in\bbZ^d}  u_n' ( t,x)  \le u(t,x)\le \gamma ^{-1} \sum_{n\in\bbZ^d} u_n' (t,x).
\eeq
Indeed, the first inequality follows from $v(t,x):=s e^{-\psi (s) t} u_n' ( t,x) \,\,(\le s)$ being a subsolution to \eqref{1.1} for each $n\in\bbZ^d$, which holds because the hypotheses on $f,f'$ yield
\[
se^{-\psi (s) t} f'(t,x,u_n' ( t,x))\le f'(t,x, se^{-\psi (s) t} u_n' ( t,x)) \le f(t,x,se^{-\psi (s) t}u_n' ( t,x)) + \psi (s) se^{-\psi (s) t}u_n' ( t,x).
\]
  The second one follows from $v(t,x):= \gamma ^{-1} \sum_{n\in\bbZ^d} u_n' (t,x)$ being a supersolution to \eqref{1.1} in the region where $v\le 1$. 
This holds because if $0<v(t,x)\le 1$, then $u_n' (t,x)\le \gamma $ for each $n\in\bbZ^d$, so we can take $v_n:= \gamma ^{-1} u_n' (t,x)\le 1$ and apply the estimate
\[
 f \left(\cdot,\cdot, \sum_{n=1}^\infty v_n \right) 
=  \gamma ^{-1} \sum_{n=1}^\infty f \left(\cdot,\cdot, \sum_{m=1}^\infty v_m \right) \left( \sum_{m=1}^\infty v_m \right)^{-1} \gamma  v_n
 \le \gamma ^{-1} \sum_{n=1}^\infty f'(\cdot,\cdot, \gamma  v_n).
\]
This estimate holds for all $v_1,v_2,\dots\ge 0$ with $0<\sum_{n=1}^\infty v_n\le 1$, by the hypotheses on $f,f'$.

Next, without loss assume that $\kappa\le 1$.
Let us take any $\delta\in(0,\frac 12]$, pick $s\in(0, \gamma ]$ such that $\psi (s)\le \frac{\kappa  \delta} 2$, let $\tau_\delta:=\frac 2{\kappa  \delta} |\ln s|\ge 2$, and assume that  \eqref{2.4} fails for some $(t,x)\in[\tau_\delta,\infty)\times D$.  Then $u(t,x)<U(t,x)-\phi (\delta t)$, so \eqref{2.3} and \eqref{2.7}  yield
\[
u(t,x)\ge  e^{\kappa  \delta t} u \left( t-\delta t, y_{t,x}^{-\delta t} \right) \ge e^{\kappa  \delta t} s e^{- \kappa  \delta t/2}\sup_{n\in\bbZ^d}  u_n'  \left( t-\delta t, y_{t,x}^{-\delta t} \right) \ge \sup_{n\in\bbZ^d}  u_n'  \left( t-\delta t, y_{t,x}^{-\delta t} \right).
\]
But this means that  \eqref{2.4} does hold for $(t,x)$, a contradiction.  Hence \eqref{2.4} must hold.

To prove \eqref{2.5}, consider any $\delta\in(0,\frac 12]$, let $a:=\gamma ^{-1}(1+d+d^2)$ and  fix any $(t',x')\in\bbR^+\times D$.  For each $n\in\bbZ^d$ let $x_n$ be the point from $c\overline C_n$ closest to $x'$ and let $e_n:=\frac{x'-x_n}{|x'-x_n|}$ when $x_n\neq x'$ (otherwise pick any $e_n\in\bbS^{d-1}$).  Since $v_n(t,x):=e^{a t-(x-x_n)\cdot e_n}$ is a supersolution to \eqref{1.1} by \eqref{2.2}, and $v_n(0,\cdot)\ge \chi_{cC_n}\ge u_n' (0,\cdot)$, we have
\[
\gamma ^{-1}u_n' (t',x')\le \gamma ^{-1} v_n(t',x') \le \gamma ^{-1} e^{a t' -  |x'-x_n|}.
\]
Sum of the right-hand sides over all $n$ with $|x'-x_n|\ge (a + 1) t'$ is less than $\frac {1}2 (t')^{-1/\delta}$ whenever $t'\ge \tau$ for some $(\delta,\gamma ,c,d)$-dependent $\tau\ge 1$.  

So if  \eqref{2.5} fails for some $(t',x')\in[ \tau ,\infty)\times D$, then $u(t',x')\ge (t')^{-1/\delta}$ and \eqref{2.7} show that there is $n$  with $|x'-x_n|< (a + 1) t'$ such that
\beq\lb{2.10}
\gamma ^{-1} u_n' (t',x')\ge \frac {1} 2 (t')^{-1/\delta}  \left((a + 1) t'+ c \sqrt d \right)^{-d} c^{d} V_d^{-1} ,
\eeq
where $V_d$ is the volume of the unit ball in $\bbR^d$.
We also have 
\[
u_n'  \left(t'+(t')^\delta, y_{t',x'}^{(t')^\delta} \right) < U' \left(t'+(t')^\delta, y_{t',x'}^{(t')^\delta} \right) - \phi ((t')^\delta)
\]
 because \eqref{2.5} fails for $(t',x')$, so \eqref{2.3a}  yields
\[
u_n'  \left(t'+(t')^\delta, y_{t',x'}^{(t')^\delta} \right)  \ge e^{\kappa  (t')^\delta} u_n' (t',x')
\]
 (recall also that $y_{t+s,y_{t,x}^s}^{-s}=x$). 
But  \eqref{2.10} shows that the right-hand side is greater than $1$
whenever $t'\ge\tau_\delta$, with some $(\delta,\gamma ,\kappa,c,d)$-dependent $\tau_\delta\ge\tau$.  So \eqref{2.5} must hold for all  $(t',x')\in[ \tau_\delta ,\infty)\times D$, otherwise we would have a contradiction.

Finally, comparison principle immediately yields \eqref{2.6}.  
%
\end{proof}

We note that when $u\mapsto \frac {f(t,x,u)}u$ is non-increasing  on $(0,1]$, then one can easily show that with $f'=f$ we have $\sup_{n\in\bbZ^d} u_n'\le u\le\sum_{n\in\bbZ^d} u_n'$.  For $d=1$, $\calL=\partial_{xx}$, $f(t,x,u)=f(u)$, and a sum of two initial data (i.e., $u(0,\cdot)=u_1(0,\cdot)+u_2(0,\cdot)$), this has already appeared in \cite[Lemma 8.4]{Uch} and  \cite[Lemma 3.5]{Bra}.

We are now ready to prove Theorem \ref{T.1.1}.

\begin{proof}[Proof of Theorem \ref{T.1.1}]
Let $f_0, \psi,B,\alpha$ be from Remark 3 after Theorem \ref{T.1.1}.  We will denote $\beta:=f_0'(0)>0$ because it will be used extensively, and let 
\[
\gamma:=\min \left\{\alpha, \,\frac 1{2\sqrt{\beta\lambda}}  ,\, \frac 12 \right\}>0
\]
 (when proving the last claim, add $\gamma$ from it into the  $\min$).  Note that we assume 
 \[
 \max_i\|b_i\|_{L^\infty}\le 2\sqrt{\beta\lambda}\le\gamma^{-1}.
 \]  
We will  show that Theorem \ref{T.2.1} applies with $y_{t_0,x}^{s}:=x=:y_{t_0+s,x}^{-s}$ for all $(t_0,s,x)\in[1,\infty)\times\bbR^+\times\bbR^d$, with $U\equiv 1\equiv U'$, and some $\kappa,\phi$ (when proving the last claim, we use instead $f':=f$ in Theorem \ref{T.2.1}).  
Then \eqref{2.4} and \eqref{2.5} will immediately yield \eqref{2.11}, provided we replace $\phi(t)$ by $\phi(t)+t^{-1/2}$.

The only hypotheses of  Theorem \ref{T.2.1} that do not obviously hold are \eqref{2.3} and \eqref{2.3a}. The proofs are identical, so let us prove \eqref{2.3}.  
The key will be the following construction of a sequence of appropriate subsolutions $\{u_{v,j}\}_{j=0}^\infty$ to \eqref{1.1}, which are essentially ballistically spreading radially symmetric plateaus (with a common positive spreading speed) at levels converging to 1 as $j\to\infty$, with the first of them starting at level $v>0$ (which can be chosen to be arbitrarily small).  We will then show that any solution $0\le u\le 1$ to \eqref{1.1} that starts above the first subsolution will not only remain above it forever, but it will also be eventually pushed above any other of the subsolutions --- and \eqref{2.3} will follow. 
In the whole proof,  all constants and functions may depend on $\gamma,\beta,\lambda,\psi,B,f_0,d$, with only the constant $v$ below (and constants/functions with $v$ in their subscripts) possibly depending on the solution $u$. 

Let $\Lambda := \frac d\alpha$, so $A\le \Lambda I$.
If $u(t,x):=w(t,|x|)$ for $x\in\bbR^d$ and some $w:\bbR^+\times \bbR\to [0,\infty)$, then we have (with arguments $(t,x)$ and $(t,y):=(t,|x|)$, as appropriate)
\begin{align*}
\sum_{i,j=1}^d A_{ij} u_{x_ix_j} = \frac{w_{yy} |x|-w_y}{|x|^3} \sum_{i,j=1}^d A_{ij} x_ix_j + \frac {d}{|x|} w_y
\ge \min\{\lambda, \Lambda \sgn w_{yy}\} |w_{yy}| - \frac {d+\Lambda}{|x|} |w_y|.
\end{align*}
This and $\|b\|_\infty\le 2\sqrt{\beta\lambda}- B$ yield for  $C:=\frac B4 \sqrt{\beta/\lambda}$ ($\le \frac\beta 2$ by $B\le 2\sqrt{\beta\lambda}$) and all $|x|\ge \frac {3(d+\Lambda)}B$,
\beq\lb{2.12}
\calL u  + (\beta-C) u-\frac B3|\nabla u| \ge \min \left\{\lambda, \Lambda \sgn w_{yy}\right\} |w_{yy}| - \left(2\sqrt{\beta\lambda}- \frac B3  \right) |w_y| + \left( \beta - C \right) w .
\eeq

Let now $z$ be the root of $\lambda z^2+ (2\sqrt{\beta\lambda}- \frac B3) z + \beta - C =0$ that lies in the second quadrant of the complex plane; note that the discriminant of this quadratic equation is
\[
4\lambda C - \frac B9 \left(12\sqrt{\beta\lambda}-B \right) \le B \sqrt{\beta\lambda} - \frac B9 10\sqrt{\beta\lambda} =- \frac B9 \sqrt{\beta\lambda}<0.
\]
Then the function $\tilde\xi (y):=e^{y\,{\rm Re}\, z}\sin (y\, {\rm Im}\, z)$ solves $\lambda \tilde\xi ''+ (2\sqrt{\beta\lambda}- \frac B3) \tilde\xi ' + (\beta - C)\tilde\xi  =0$.  Let $ y_3:=\frac\pi{{\rm Im}\, z}$ and let $y_2\in(0, y_3)$ be its greatest inflection point that is smaller than $ y_3$, and let
\[
\xi(y):=
\begin{cases}
0 & y>  y_3, \\
\tilde\xi (y) & y\in[y_2, y_3], \\
\tilde\xi (y_2) + \tilde\xi '(y_2)(y-y_2) & y<y_2.
\end{cases}
\]
Since $\tilde\xi $ is convex and positive on $[y_2, y_3)$, so is $\xi$ on $(-\infty, y_3)$.  Hence $\xi''\ge 0\ge \xi'$ (in the sense of distributions due to the discontinuity of $\tilde\xi $ at $ y_3$; elsewhere it is $C^2$) and so
\beq\lb{2.13}
 \lambda |\xi''| - \left(2\sqrt{\beta\lambda}- \frac B3  \right) |\xi'| + \left( \beta - C \right) \xi \ge 0.
\eeq
Indeed, this clearly  holds on $[y_2,\infty)$, while on $(-\infty,y_2]$ it follows from 
\beq\lb{2.14}
\left(2\sqrt{\beta\lambda}- \frac B3\right) \xi' (y_2)+ (\beta - C)\xi(y_2) =0
\eeq
because on this interval $\xi''\equiv 0$, $\xi'\equiv\xi'(y_2)$, and $\xi$ is decreasing.  Then of course the left-hand side of \eqref{2.14} equals $(\beta-C)\xi'(y_2)(y-y_2)\ge 0$ on $(-\infty,y_2]$, and it is  easy to show that there are $y_0<y_1<y_2$ and $\zeta:\bbR\to[0,\infty)$ that coincides with $\xi$ on $[y_1,\infty)$, is concave and $C^2$ on $(-\infty,y_2]$, constant on $(-\infty,y_0]$ (and hence also maximal there), and satisfies
\beq\lb{2.15}
\min \left\{\lambda, \Lambda \sgn \zeta''\right\} |\zeta''| - \left(2\sqrt{\beta\lambda}- \frac B3  \right) |\zeta'| + \left( \beta - C \right) \zeta \ge 0.
 \eeq
 
 Let now $v_0>0$ be such that $\psi(v_0)\le \frac C 2$  and for any $v\in(0,v_0]$ let 
\[
 w_v(t,y):=\frac {\min\{e^{Ct/2}v, v_0 \}} {\zeta(y_0)}  \, \zeta \left( y+y_0-\frac {3(d+\Lambda)}B-\frac B3 t \right).
 \]
   Then $0\le w_v \le v_0$ because $0\le\zeta\le\zeta(y_0)$, and $w_v(t,\cdot)$ is constant on $[-\infty,\frac {3(d+\Lambda)}B]$ for each $t\ge 0$.
This, \eqref{2.12}, and \eqref{2.15} show that  $u_{v,0}(t,x):=w_v(t,|x|)$ satisfies on $\bbR^+\times\bbR^d$,
\[
\calL u_{v,0}  + f(t,x,u_{v,0}) - (u_{v,0})_t \ge  \calL u_{v,0}  + (\beta-C) u_{v,0}-\frac B3|\nabla u_{v,0}|  \ge 0.
\]
Hence $u_{v,0}$ is a subsolution to \eqref{1.1} such that at each $t\ge 0$ we have
 \beq\lb{2.16}
\min\{e^{Ct/2}v, v_0\} \chi_{B_{p+q t}(0)} \le u_{v,0}(t,\cdot) 
\le \min\{e^{Ct/2}v, v_0\} \chi_{B_{y_3-y_0+p+q t}(0)} ,
\eeq
where $p:= \frac{3(d+\Lambda)}B$ and $q:=\frac B3$.

Let now $t_{v,0}:=\max\{ \frac 2C \ln \frac{v_0}v, \frac {y_3-y_0} {q} \}$, so that 
 \beq\lb{2.17}
v_0 \chi_{B_{y_3-y_0+p+q (t-t_{v,0})}(0)} \le u_{v,0} (t,\cdot)  \le v_0
\eeq
for all $t\ge t_{v,0}$.  Let $L\ge 1$ be a Lipschitz constant for $f_0$ and let $r:=\frac{f_0(v_0)}{2\beta -C +L}$ ($<\frac{f_0(v_0)}L\le v_0$).  It follows that $f_0(v')\ge (\beta-\frac C2)(v'-(v_0-r))$ for all $v'\in[v_0-r,v_0+r]$, and so as in the above argument, we obtain that
 \[
 u_v'(t,x):=v_0-r+\frac {\min\{e^{Ct/2}r, 2r \}} {\zeta(y_0)}  \, \zeta \left( |x| +y_0-p- q (t-t_{v,0}) \right)
 \]
 is a subsolution to \eqref{1.1} on time interval $[t_{v,0},\infty)$.  Since $u_v'\le v_0\le  u_{v,0}$ on $\{t_{v,0}\}\times B_{y_3-y_0+p}(0)$ and $u_v'(t,\cdot) = v_0-r <v_0=  u_{v,0}(t,\cdot)$ on $ \partial B_{y_3-y_0+p+q(t-t_{v,0})}(0)$ for each $t\ge t_{v,0}$, it follows that
 \[
u_{v,1}:= \max \left\{u_{v,0}, u_v'\chi_{\{|x|<y_3-y_0+p+q(t-t_{v,0})\}} \right\}
 \]
 is a subsolution to \eqref{1.1} on  time interval $[t_{v,0},\infty)$, with $u_{v,1}(t_{v,0},\cdot)\le u_{v,0}(t_{v,0},\cdot)$.  Also, if we let $v_1:=v_0+r$ and $t_{v,1}:=t_{v,0}+\sigma$, where $\sigma:=\max\{ \frac 2C \ln 2, \frac {y_3-y_0} {q} \}$, then for $t\ge t_{v,1}$ we have
\[
v_1 \chi_{B_{y_3-y_0+p+q (t-t_{v,1})}(0)} \le u_{v,1}(t,\cdot)  \le v_1.
\]

We can use this in place of  \eqref{2.17} to similarly obtain a subsolution $u_{v,2}$ to \eqref{1.1} on  time interval $[t_{v,1},\infty)$, such that $u_{v,2}(t_{v,1},\cdot)\le u_{v,1}(t_{v,1},\cdot)$ and for all $t\ge t_{v,2}$ we have
\[
v_2 \chi_{B_{y_3-y_0+p+q (t-t_{v,2})}(0)} \le u_{v,2}(t,\cdot)  \le v_2,
\]
where $v_2:=v_1+\frac{f_0(v_1)}{2\beta -C +L}$ and $t_{v,2}:=t_{v,1}+\sigma$. Repeating this argument, we obtain a $v$-independent  sequence $0<v_1<v_2<\dots$ converging to 1 (because Lipschitz $f_0>0$ on $(0,1)$) and subsolutions $\{u_{v,k}\}_{k\ge 0}$ to \eqref{1.1} on intervals $[t_{v,k-1},\infty)$, with $t_{v,k}:=t_{v,0}+ k\sigma$ and $t_{v,-1}:=0$, such that for each $k\ge 1$ we have $u_{v,k} (t_{v,k-1},\cdot)\le u_{v,k-1}(t_{v,k-1},\cdot)$ and
\beq\lb{2.18}
v_k \chi_{B_{y_3-y_0+p+q (t-t_{v,k})}(0)} \le u_{v,k} (t,\cdot)  \le v_k
\eeq
for all $t\ge t_{v,k}$.  These subsolutions will now allow us to obtain \eqref{2.3}.

Let  $u:(t_{0}-1,\infty)\times\bbR^d\to [0,1]$ solve \eqref{1.1} and pick any $x\in\bbR^d$.  Shift $A,b,f,u$ by $(-t_{0}-1,-x)$ in space-time so that we have $u:(-2,\infty)\times\bbR^d\to [0,1]$, and we therefore need to prove \eqref{2.3} with $(t_{0},x)=(-1,0)$ (note that the above subsolutions are also subsolutions for all space-time translations of $A,b,f$).  This is then the estimate
\beq\lb{2.19}
u(s-1,0)\ge \min \left\{  e^{\kappa  s} u(-1,0), \, 1-\phi (s) \right\}
\eeq
for some $u$-independent $\kappa,\phi$ as required  and all $s\ge 0$.
Assume also that $u\not\equiv 0$ because otherwise this  holds trivially.  

By the parabolic Harnack inequality \cite[Corollary 7.42]{Lie}, $u\ge 0$, and $\big|\frac {f(\cdot,\cdot,u)}u \big|\le\frac 1\gamma$ for all $u\in(0,1]$ (since $f$ is KPP), there is $u$-independent  $\mu\in(0,v_0e^{-C(y_3-y_0)/2q}]$ such that $u(0,\cdot)\ge \mu u(-1,0) \chi_{B_{y_3-y_0+p}(0)}$ (note that dependence on $\lambda$ in Remark 2 after Theorem \ref{T.1.1} enters through this $\mu$).
If we let $v:=\mu u(-1,0)\le v_0$, then  $u(0,\cdot)\ge u_{v,0}(0,\cdot)$, so $u\ge u_{v,0}$ on $[0,\infty)\times\bbR^d$.  
Thus $u(t_{v,0},\cdot)\ge u_{v,0}(t_{v,0},\cdot)\ge u_{v,1}(t_{v,0},\cdot)$, so $u\ge u_{v,1}$ on $[t_{v,0},\infty)\times\bbR^d$.  Continuing this way, we find that $u\ge u_{v,k}$ on $[t_{v,k-1},\infty)\times\bbR^d$ for each $k$, hence
\beq\lb{2.20a}
u(t,0)\ge \max \left \{ \min\{e^{Ct/2} v, v_0\}, v_1 H(t-t_{v,1}), v_2 H(t-t_{v,2}),\dots \right \} 
\eeq
for all $t\ge 0$, where $H:=\chi_{[0,\infty)}$.  Now let $\kappa:=\frac C{16}$ and for each $s\ge 0$  define 
\[
\phi(s):=
\begin{cases} 1 & s< s_0:=\max \left\{2,\frac {16}C\ln \frac{1}\mu, 2(t_{v,1}-t_{v,0})+1 \right\}, \\
1-v_{\max\{ k \,|\, s\ge 2(t_{v,k}-t_{v,0})+1\}} &  s\ge s_0,
\end{cases}
\]
which converges to 0 as $s\to\infty$ because $v_k\to 1$.
Since $t_{v,k}-t_{v,0}$ ($=k\sigma$) is independent of $v$ (and hence of $u$) for each $k$, so is $\phi$.  Also note that $t_{v,0}= \frac 2C \ln \frac{v_0}v$ (and hence $e^{C  t_{v,0}/2} v =v_0$) because $\mu\le v_0e^{-C(y_3-y_0)/2q}$.

Then \eqref{2.19} obviously holds for all $s<s_0$, so let us assume that $s\ge s_0$   and let $k:=\max\{ j \,|\, s\ge 2(t_{v,j}-t_{v,0})+1\}$ (so $\phi(s)=1-v_{k})$.  If \eqref{2.19} fails, then \eqref{2.20a} implies that $s<t_{v,k}+1$, and hence $t_{v,k}<2t_{v,0}$ by the definition of $k$.  But then $s\in[s_0,2t_{v,0}+1]$, so
\[
e^{\kappa  s} u(-1,0) = e^{C  s/16} v\mu^{-1}  \le e^{C s/8} v \le e^{C (s-1)/4} v \le \min\{e^{C(s-1)/2} v, v_0\} \le u(s-1,0)
\]
by \eqref{2.20a}.
Hence \eqref{2.19} also holds for all $s\ge s_0$, and the proof is finished.
 \end{proof}

Note that the proof shows that $\phi$ decays exponentially as $s\to\infty$ if $\liminf_{v\to 1} \frac {f_0(v)}{1-v} >0$.

\section{Non-local Diffusion Case} \lb{S3}

Let us now consider \eqref{1.1} on  $\bbR^d$, with $\calL$ from \eqref{1.2a}.   Any solutions considered here will be from the space $\mathcal A$ from Theorem \ref{T.1.1a}.

\begin{theorem} \lb{T.2.2}
Theorem \ref{T.2.1} holds when $f,f'$ are as in that theorem, $\calL$ is from \eqref{1.2a} instead of \eqref{1.2}, hypotheses on $\calL$ and \eqref{2.2} are replaced by $ K $ being even in $\nu$ and 
\[
\max\left\{  \sup_{(t,x,\nu)\in\bbR^+\times \bbR^{2d}}  K (t,x,\nu) \max \left\{   |\nu|^{d+2-\gamma}, e^{\gamma|\nu|} \right\},  \, \sup_{(t,x)\in\bbR^+\times \bbR^d} f_u(t,x,0)
\right\} \le\gamma ^{-1},
\]
and well-posedness and comparison principle hold for \eqref{1.1} on the space $\mathcal A$ from Theorem~\ref{T.1.1a}.
\end{theorem}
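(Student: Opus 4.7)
The plan is to follow the proof of Theorem~\ref{T.2.1} almost verbatim, replacing only the explicit ballistic supersolutions that used the local structure of $\calL$. The opening comparison-principle estimate \eqref{2.7} is insensitive to whether $\calL$ comes from \eqref{1.2} or \eqref{1.2a}: since $\calL$ is still linear, the computations showing that $se^{-\psi(s)t}u_n'$ is a subsolution and that $\gamma^{-1}\sum_n u_n'$ is a supersolution (while it stays $\le 1$) use only the hypotheses on $f,f'$ and the comparison principle assumed on $\mathcal A$. Likewise, once \eqref{2.7} is available, the step that derives \eqref{2.4} invokes only \eqref{2.3} and \eqref{2.7}, so it needs no change, and \eqref{2.6} in the case $f'=f$ is again immediate. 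Thus the sole substantive modification happens in the proof of \eqref{2.5}.

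The key new computation is the construction, for suitable $\mu\in(0,\gamma)$ and $a>0$ depending only on $\gamma$ and $d$, of an exponential supersolution
\[
v_n(t,x):=e^{at-\mu(x-x_n)\cdot e_n}
\]
to \eqref{1.1} with $f'$ in place of $f$, where $x_n\in c\overline{C_n}$ is the point closest to $x'$ and $e_n=(x'-x_n)/|x'-x_n|$, exactly as in the proof of Theorem~\ref{T.2.1}. Because $\calL$ acts on $v_n$ through pure spatial translations, $\calL v_n(t,x)/v_n(t,x)={\rm p.v.}\int K(t,x,\nu)(e^{-\mu\nu\cdot e_n}-1)\,d\nu$, and the evenness of $K$ in $\nu$ symmetrizes this to the (now ordinary) integral $\int K(t,x,\nu)(\cosh(\mu\nu\cdot e_n)-1)\,d\nu$ (the linear part cancels, curing the would-be $|\nu|^{-d-1+\gamma}$ singularity). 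On $\{|\nu|\le 1\}$ one uses $\cosh(\mu\nu\cdot e_n)-1\le C\mu^2|\nu|^2$ together with $K\le\gamma^{-1}|\nu|^{-d-2+\gamma}$ to bound the integrand by an integrable $|\nu|^{-d+\gamma}$; on $\{|\nu|>1\}$ one uses $\cosh(\mu\nu\cdot e_n)-1\le e^{\mu|\nu|}$ together with $K\le\gamma^{-1}e^{-\gamma|\nu|}$ to get the integrable $e^{-(\gamma-\mu)|\nu|}$, provided $\mu<\gamma$. Fixing for instance $\mu:=\gamma/2$ produces a uniform bound $\calL v_n/v_n\le M(\gamma,d)$; combined with $f'(t,x,v_n)\le\gamma^{-1}v_n$ from the hypotheses on $f'$, the choice $a:=M(\gamma,d)+\gamma^{-1}$ makes $v_n$ a supersolution. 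Verifying that this integral is uniformly finite, and thereby fixing $\mu,a$ in terms of $(\gamma,d)$ alone, is the main obstacle; it is where the evenness of $K$ and the two-sided kernel bounds do the work that the drift bound did in the local case.

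With $v_n$ in hand, the initial comparison $v_n(0,\cdot)\ge\chi_{cC_n}\ge u_n'(0,\cdot)$ follows as before, since $x_n$ being the closest point in the convex set $c\overline{C_n}$ to $x'$ forces $(x-x_n)\cdot e_n\le 0$ for $x\in cC_n$, and hence $v_n(0,x)\ge 1$ there. The comparison principle on $\mathcal A$ (applied to $\min\{v_n,1\}$ if needed, which remains a supersolution because $u\equiv 1$ solves \eqref{1.1}) then yields $u_n'\le v_n$. For $|x'-x_n|\ge R_0:=(a/\mu+1)t'$ this gives $u_n'(t',x')\le e^{at'-\mu|x'-x_n|}\le e^{-\mu t'-\mu(|x'-x_n|-R_0)}$, and summing over such $n\in\bbZ^d$, using that the number of lattice centers $x_n$ with $|x'-x_n|\in[R,R+1]$ is $O(R^{d-1})$, produces a tail bounded by $C(t')^{d-1}e^{-\mu t'}\le\tfrac12(t')^{-1/\delta}$ for $t'\ge\tau(\delta,\gamma,c,d)$. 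This is the exact analogue of the estimate preceding \eqref{2.10} in the proof of Theorem~\ref{T.2.1}, and the contradiction argument using \eqref{2.3a} then concludes \eqref{2.5} verbatim.
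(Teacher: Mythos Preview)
Your proposal is correct and follows essentially the same approach as the paper: both recognize that the only modification needed is the exponential supersolution $v_n$, both take the exponent $\mu=\gamma/2$, and both use the evenness of $K$ in $\nu$ to cancel the first-order part near the origin (your $\cosh(\mu\nu\cdot e_n)-1\le C\mu^2|\nu|^2$ is the paper's Taylor expansion $u(x+\nu)+u(x-\nu)-2u(x)=2\nu\cdot D^2u(x)\nu+O(|\nu|^3)$) together with the exponential decay of $K$ at infinity. Your treatment is in fact slightly more explicit than the paper's, which simply asserts $(\calL+\gamma^{-1})v_n\le a v_n$ from these ingredients; your adjustment of the ballistic cutoff from $(a+1)t'$ to $(a/\mu+1)t'$ is also the correct adaptation.
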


\begin{proof}
The proof is identical to that of Theorem \ref{T.2.1}, with the only change being the choice of the exponential supersolutions $v_n$, which will now instead be $v_n(t,x):=e^{a t-\gamma(x-x_n)\cdot e_n/2}$ with some $\gamma$-dependent $a>0$.
This is because any  $u\in C^3(\bbR^d)$ satisfies
\[
u(x+\nu)+u(x-\nu)-2u(x)= 2 \nu\cdot D^2u(x)\nu + O(|\nu|^3),
\]
so $ K $ being even in $\nu$ and local integrability of $|\nu|^{-d+\gamma}$ imply that $(\calL+\gamma) v_n\le a v_n$ holds with some $a>0$ depending only on $\gamma$.
\end{proof}


\begin{proof}[Proof of Theorem \ref{T.1.1a}]
This proof tracks that of  Theorem  \ref{T.1.1}, first picking  $\beta,f_0,\psi,\gamma$ in the same way and then using Theorem \ref{T.2.2} instead of Theorem \ref{T.2.1}.  One then again only needs to prove \eqref{2.3} with $y_{t_0,x}^{s}:=x=:y_{t_0+s,x}^{-s}$,
and $U\equiv 1$, for some $\kappa,\phi$.  This is  done via a similar construction of an appropriate sequence of  subsolutions to \eqref{1.1}.

Let $\zeta:\bbR \to [-\frac 12, \frac 12]$ be such that $\|\zeta^{(j)}\|_{L^\infty}\le 1$ for $j=1,2,3$ and for some $0=y_0<y_1<y_2<y_3<y_4$ we have that $\zeta\equiv \frac 12$ on $(-\infty, y_0]$ and $\zeta\equiv -\frac 12$ on $[ y_4,\infty)$, also $\zeta$ is concave on $(-\infty, y_1]$ and convex on $[y_1,\infty)$, as well as $\zeta(y_2)=\frac 14$ and $\zeta(y_3)=0$ and $\zeta''\equiv \frac 1{100}$ on $[y_2,y_3]$. 
Taylor's theorem then shows that for any $\eta,p>0$, the function $u(x):=\zeta(\eta |x|-p)$ satisfies
\[
\left| u(x+\nu)+u(x-\nu)-2u(x) - 2 \nu\cdot D^2u(x)\nu \right| \le 2 c_d \eta^{3}|\nu|^3
\]
for some $c_d>0$ only depending on $d$, and for all $x,\nu\in\bbR^d$.  A simple computation shows that for any $x,\nu\in\bbR^d$ we also have
\[
 \nu\cdot D^2u(x)\nu= \left( \eta^2\zeta''(\eta |x|-p) - \frac{\eta\zeta'(\eta |x|-p)}{|x|} \right) \left( \frac x{|x|}\cdot \nu \right)^2.
 \]
If we take $p\ge 200$, then the first parenthesis above is zero when $|x|\le \frac {200}\eta$, while for $|x|\ge \frac {200}\eta$ it is bounded  below by $- \frac{3\eta^2}{200}$, and for $|x|\in[\frac{y_2+p}\eta,\frac{y_3+p}\eta]$ it is bounded below by $\frac{\eta^2}{200}$.
This, evenness of $K$ in $\nu$, and \eqref{2.31} (recall also that we have chosen $\gamma$ to be $\le\alpha$) show that at any $t> 0$ we have for any $R\ge 0$,
\beq\lb{4.1}
\calL u(x)  \ge -   
\int_{B_R(0)}  \frac {\max\{|\nu|^{-d-2+\gamma}, 1\}}\gamma \left( \frac{3\eta^2 |\nu|^2}{200} + c_d \eta^3 |\nu|^3 \right)d\nu 
- \int_{\bbR^d \setminus B_R(0)} \frac {e^{-\gamma|\nu|}}\gamma d\nu
\eeq
for all $x\in\bbR^d$ (recall also that $u(x+\nu)-u(x)\ge -1$), but also
\beq\lb{4.2}
\calL u(x)  \ge   
\int_{B_\gamma(0)}   \calK(|\nu|) \left[ \gamma\frac{\eta^2}{200}  \left( \frac x{|x|}\cdot \nu \right)^2 - \frac{ c_d \eta^3 |\nu|^3} \gamma \right] d\nu  
- \int_{\bbR^d}  \frac   {e^{-\gamma|\nu|} }\gamma  \min\{ c_d \eta^3 |\nu|^3,1\}  d\nu 
\eeq
when $|x|\in[\frac{y_2+p}\eta,\frac{y_3+p}\eta]$ (note that both these lower bounds are independent of $x$).

Let now $C':=\frac 12\inf_{u\in(0,1/2]} \frac {f_0(u)}u>0$, pick $R$ so that the second integral in \eqref{4.1} is $\le \frac {C'}{8}$, and then $\eta\in(0,1]$ so that the first integral is $\le \frac {C'}{8}$.  Then \eqref{4.1} shows that 
\beq\lb{4.3}
\calL u(x) +2C'u(x) \ge  C' u(x)
\eeq
whenever $u(x)\ge \frac 14$.  Next decrease $\eta$ further (this will not compromise \eqref{4.3}) so that the right-hand side of \eqref{4.2} becomes some $C''>0$ (this is possible because $\calK(r)\ge 1$ for $r\le\gamma$).
Then \eqref{4.2} shows that
\beq\lb{4.4}
\calL u(x) \ge C''
\eeq
whenever $u(x)\in[0, \frac 14]$.  But this and $\eta\le\frac 14$ mean that if we let $C:=\frac 12 \min\{C',C''\}>0$, then
\beq\lb{4.5}
\calL u(x) + 2C'u(x)\ge C \left( |\nabla u(x)|+u(x) \right)
\eeq
holds whenever $u(x)\ge 0$.  This and the definition of $C'$ show that if $v_0:=\frac 12$, $v\in(0,\frac 12]$, and
\[
u_{v,0}(t,x):=  \left(  \min\{e^{Ct/2}v, v_0 \} \, 2 \zeta(\eta |x|-200- Ct)  \right)_+,
\]
then $u_{v,0}$ is a subsolution to \eqref{1.1} on $\bbR^+\times\bbR^d$.  The factor $\frac 12$ in the exponent is not necessary but we added it to make $u_{v,0}$ similar to $u_{v,0}$ in the proof of Theorem \ref{T.1.1}.

In particular, we now have 
 \[
 v_0 \chi_{B_{(200+C t)/\eta }(0)} \le u_{v,0} (t,\cdot)  \le v_0
 \]
 for all $t\ge \frac 2C \ln \frac {v_0}v$.
As in the proof of Theorem \ref{T.1.1}, we can now find $r,t_{v,0}> 0$ (then we let $v_1:=v_0+r$) and a subsolution 
 \[
 u_v'(t,x):=v_0-r+  \left( \min\{e^{Ct/2}r, 2r \}  \, 2 \zeta \left( \eta |x| -200- C (t-t_{v,0}) \right)  \right)_+
 \]
to \eqref{1.1}  on time interval $[t_{v,0},\infty)$ such that
 \[
u_{v,1}:= \max \left\{u_{v,0}, u_v'\chi_{\{ |x|<(200+Ct)/\eta\}} \right\}
 \]
 is also a subsolution to \eqref{1.1} on time interval $[t_{v,0},\infty)$.  We note that non-locality of $\calL$ causes a minor issue here but this can be easily overcome by halving $C$ above  so that we can add $Cu(x)$ to the right-hand side of  \eqref{4.5}.  This gives us an extra term $C (u_v'(t,x)-(v_0-r))$ in the same estimate for $u_v'$, which is no less than $Cr$ at all points where $u_{v,1}>u_{v,0}$ (all these have $|x|<(y_2+200+ C (t-t_{v,0}))/\eta$).  This will dominate the decrease of $\calL u_{v'}$ at these points caused by replacing $u_{v'}$ by $u_{v'} \chi_{\{(200+Ct)/\eta\}}$, provided $t_{v,0}$ is chosen large enough (recall that $K$ has a uniform  exponential decay as $\nu\to\infty$).  
 
 The construction of subsolutions $u_{v,2}, u_{v,3},\dots$ is analogous  (without the need to change $C$ further), with the corresponding times $t_{v,1},t_{v,2},\dots$
 not anymore forming an arithmetic sequence, but with $t_{v,k}-t_{v,0}$ again independent of $v$.  We again have $u_{v,k}(t_{v,k-1},\cdot)\le u_{v,k-1}(t_{v,k-1},\cdot)$ for all $k\ge 1$, and \eqref{2.18} instead becomes
 \[
 v_k \chi_{B_{(200+C (t-t_{v,k-1}))/\eta}(0)} \le u_{v,k} (t,\cdot)  \le v_k
 \]
  for all $t\ge t_{v,k-1}+\frac 2C \ln 2$.  The rest of the proof is identical to that of Theorem \ref{T.1.1} (in particular, the Harnack inequality for $\calL$ is used in this part).
\end{proof}

\end{document}